\newlength{\standardunitlength}
\newtheorem{prop}{Proposition}[section]
\newtheorem{lemma}[prop]{Lemma}
\newtheorem{cor}[prop]{Corollary}
\newtheorem{theorem}[prop]{Theorem}
\begin{document}

\title[Beta approximation for the two alleles]{Beta approximation for the two alleles Moran model by Stein's method}

\author{Jason Fulman}
\address{Department of Mathematics, University of Southern California, Los Angeles, CA 90089-2532, USA }
\email{fulman@usc.edu}

\date{July 25, 2023}

\thanks{The author was funded by Simons Foundation grants 400528 and 917224, and thanks Christian D\"obler for helpful
conversations.}

\begin{abstract} In work on the two alleles Moran model, Ewens showed that the stationary distribution for the number of genes
of one type can be approximated by a Beta distribution. In this short note, we provide a sharp error term for this approximation. We
show that this example fits perfectly into D\"obler's framework for Beta approximation by Stein's method of exchangeable pairs.
\end{abstract}

\maketitle

Keywords: Stein's method, Moran model, Beta approximation, population genetics

\section{Introduction}

In work on the ``two alleles Moran model'' of populations genetic Ewens (pages 107-108 of \cite{Ewens}) is led to study the stationary
distribution $\pi$ of the Markov chain on the set $\{0,1,\cdots,2n\}$ with transition probabilities

\begin{eqnarray*}
p(i,i-1) & = & [i(2n-i)(1-v) + u i^2]/(2n)^2 \\
p(i,i+1) & = & [i(2n-i)(1-u) + v (2n-i)^2]/(2n)^2 \\
p(i,i) & = & 1-p(i,i-1)-p(i,i+1).
\end{eqnarray*} Here $0 \leq u,v \leq 1$ are parameters. 

As Ewens shows, there is an exact formula for this stationary distribution:

\[ \pi(i) = \pi(0) \frac{(2n)! \Gamma(i+A) \Gamma(B-i)}{i! (2n-i)! \Gamma(A) \Gamma(B)}. \]
Here \[ \Gamma(t)= \int_0^{\infty} x^{t-1}e^{-x}dx \] is the well-known gamma  function, $A=2nv/(1-u-v)$, $B=2n(1-v)/(1-u-v)$,
$C=2nu/(1-u-v)$, $D=2n/(1-u-v)$ and $\pi(0)=\Gamma(B) \Gamma(A+C) / [\Gamma(D) \Gamma(C)]$.

Unfortunately, this formula is hard to work with, so Ewens approximates $W$ by a Beta distribution.
More precisely, pick $I$ from the distribution $\pi$ and let $W=I/(2n)$. Then letting $v=a/(2n)$ and $u=b/(2n)$,
Ewens shows that for $a,b$ fixed and $2n$ large, $W$ is close to the Beta(a,b) distribution which has density

\[ \frac{\Gamma(a+b)}{\Gamma(a) \Gamma(b)} x^{a-1} (1-x)^{b-1} \ , \ 0<x<1 \] and $0$ else.

In this note, we use Stein's method of exchangeable pairs to compute the mean and variance of $W$ (not totally obvious from the
definition of $W$) and to give a sharp error term of order $1/n$ for Ewens' result.
We use what is known as the $d_2$ distance in the Stein's method community
(see the bottom of page 4 of \cite{DoeblerPeccati}, for example). The $d_2$ distance between random variables $X$ and $Y$ is defined as
\[ \sup_{h \in H_2} |E[h(X)]-E[h(Y)]| \] where $E$ denotes expected value and $H_2$ consists of the differentiable functions
 $h$ on $\mathbb{R}$ such that $h^{'}$ is Lipschitz continuous and $||h^{'}||_{\infty},||h^{''}||_{\infty} \leq 1$. 
Note that since $h'$ is Lipschitz, $h^{''}$ exists Lebesgue almost everywhere. The norms on $h'$ and $h''$ are the essential
supremum norms.

Our main result can be stated as follows.

\begin{theorem} \label{maintheorem}
1) The $d_2$ distance between $W$ and a Beta(a,b) random variable is at most $K(a,b)/n$, where $K(a,b)$ is an
explicit constant depending only on $a$ and $b$.  One can take $K(a,b)$ to be
\[ \frac{(9a+6b) C(a,b) + C(a+1,b+1) + (a+b) C(a+1,b+1) C(a,b)}{12} \] where $C(\cdot,\cdot)$ are defined in Theorem
\ref{Doe} below. \[ \]
2) The $d_2$ distance between $W$ and a Beta(a,b) random variable is at least
\[ \frac{ab}{4n(a+b)(1+a+b)^2}.\]
\end{theorem}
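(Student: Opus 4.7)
For part~(1), my plan is to invoke D\"obler's Beta-approximation result (Theorem~\ref{Doe}) via an exchangeable pair built from the Markov chain itself. Start the chain from stationarity, so $I \sim \pi$, let $I'$ denote the state one step later, and set $W = I/(2n)$, $W' = I'/(2n)$; reversibility of the birth--death chain with respect to $\pi$ makes $(W,W')$ exchangeable. A direct calculation from the transition probabilities, after substituting $v = a/(2n)$, $u = b/(2n)$ and collecting terms, gives
\[ \mathbb{E}[W' - W \mid W] \;=\; \frac{a - (a+b) W}{(2n)^2}, \]
so D\"obler's linearity condition is satisfied \emph{exactly} with rate $\lambda = 1/(2n)^2$ and drift matching the Stein operator $x(1-x) f'(x) + (a - (a+b)x) f(x)$ that characterizes $\mathrm{Beta}(a,b)$. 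Taking expectations of both sides recovers $\mathbb{E}[W] = a/(a+b)$ with no further work.

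Next compute $\mathbb{E}[(W'-W)^2 \mid W]$ from the same transition probabilities: it equals $2W(1-W)/(2n)^2 + R(W)/(2n)^3$, where $R$ is an explicit low-degree polynomial in $W$ with coefficients linear in $a,b$; this matches the desired quadratic variation $2\lambda\phi(W)$, with $\phi(x)=x(1-x)$, up to an $O(1/n^3)$ remainder. Since $|W'-W| \leq 1/(2n)$ deterministically, each error term in Theorem~\ref{Doe} is of order $1/n$. Feeding exact linearity, near-exact quadratic variation, and the deterministic step bound into the theorem delivers the $d_2$ estimate $K(a,b)/n$; the three summands of $K(a,b)$ correspond respectively to the quadratic-variation remainder, the third-moment bound $|W'-W|^3 \leq 1/(2n)^3$, and the cross term in D\"obler's inequality, together with the coefficients $C(a,b)$ and $C(a+1,b+1)$.

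For part~(2), the strategy is to exhibit a single witness $h \in H_2$ that separates the two laws by the claimed amount. Take
\[ h(x) \;=\; \begin{cases} 0, & x \leq 0, \\ x^2/2, & 0 \leq x \leq 1, \\ x - 1/2, & x \geq 1, \end{cases} \]
which is $C^1$ with $h'$ continuous piecewise linear and $\|h'\|_\infty = \|h''\|_\infty = 1$, so $h \in H_2$. Since $W$ and $Z \sim \mathrm{Beta}(a,b)$ are both supported in $[0,1]$ with the common mean $a/(a+b)$, one has $\mathbb{E}[h(W)] - \mathbb{E}[h(Z)] = \tfrac{1}{2}(\mathrm{Var}(W) - \mathrm{Var}(Z))$. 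The formula for $\pi$ identifies $I$ as a beta-binomial $(2n, A, C)$ random variable; inserting $A = a\cdot 2n/(2n-a-b)$ and $C = b\cdot 2n/(2n-a-b)$ into the closed-form beta-binomial variance simplifies to
\[ \mathrm{Var}(W) \;=\; \frac{ab}{(a+b)^2(a+b+1)} \cdot \frac{1}{1 - (a+b)/[2n(a+b+1)]}. \]
The leading factor is exactly $\mathrm{Var}(Z)$, and the elementary inequality $1/(1-y) \geq 1+y$ for $y \in (0,1)$, applied with $y = (a+b)/[2n(a+b+1)]$, yields $\mathrm{Var}(W) - \mathrm{Var}(Z) \geq ab/[2n(a+b)(a+b+1)^2]$; halving produces the stated bound.

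The main obstacle is part~(1): verifying the two conditional moments is routine, but shepherding the $O(1/n^3)$ quadratic-variation remainder and the cubic step term through D\"obler's inequality \emph{with the exact constant} demanded by the stated $K(a,b)$ requires careful bookkeeping of the constants $C(a,b)$ and $C(a+1,b+1)$. Part~(2) has no real obstacle once one spots that $\pi$ is beta-binomial, after which the lower bound is essentially forced.
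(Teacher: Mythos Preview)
Your proposal is correct and, for part~(1), essentially identical to the paper's: same exchangeable pair from one step of the reversible chain, same $\lambda=1/(2n)^2$, exact linearity, and the quadratic-variation remainder and cubic step bound fed into D\"obler's Theorem~\ref{Doe}. The paper makes the bookkeeping explicit---the remainder is $S=\tfrac{1}{4n}\bigl[2(a+b)W^2-(3a+b)W+a\bigr]$, hence $E|S|\le(3a+2b)/(4n)$, and $E|W'-W|^3/\lambda\le 1/(2n)$---which yields exactly the stated $K(a,b)$.

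Part~(2) is where you diverge. The paper uses the witness $h(x)=\tfrac12 x(1-x)$ on $[0,1]$, which it extends to an $H_2$ function on $\mathbb{R}$ by a periodic odd reflection (since $\tfrac12 x(1-x)$ is not globally Lipschitz), computes $E[h(W)]$ and $E[h(Z)]$ exactly, and then lower-bounds the difference. It obtains $\mathrm{Var}(W)$ not from the beta-binomial formula but from the exchangeable pair itself, via the identity $E\bigl[E[(W')^2-W^2\mid W]\bigr]=0$. Your choice of $h(x)=x^2/2$ on $[0,1]$ with a $C^1$ piecewise-linear extension is a cleaner element of $H_2$ (no periodic trick needed), and since $E[W]=E[Z]$ both witnesses reduce to the same quantity $\tfrac12|\mathrm{Var}(W)-\mathrm{Var}(Z)|$. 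Your route to $\mathrm{Var}(W)$ via the beta-binomial identification $I\sim\mathrm{BetaBinomial}(2n,A,C)$ is more direct than the paper's recursion and gives the pleasant factorization $\mathrm{Var}(W)=\mathrm{Var}(Z)/(1-y)$ with $y=(a+b)/[2n(a+b+1)]$; the inequality $1/(1-y)\ge 1+y$ then lands exactly on the paper's bound. The paper's exchangeable-pair computation, on the other hand, generalizes to higher moments without any closed-form for $\pi$.
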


{\it Remark:} From Lemma 1.4 of \cite{DoeblerPeccati}, the Wasserstein distance can be upper bounded in terms of the $d_2$
distance. Moreover, for a Beta distribution with bounded density ($a \geq 1$ and $b \geq 1$), one can also upper
bound the Kolmogorov distance in terms of the $d_2$ distance. 
\[ \]

In Section \ref{mainresults} of this paper, we will deduce Theorem \ref{maintheorem} from a general result of  D\"obler \cite{Doebler}. The example seems quite interesting and we believe it will serve as a useful testing ground for Stein's method researchers. Indeed, it is a ``minor miracle'' that the natural exchangeable pair $(W,W')$ for our example exactly satisfies the condition \[ 4n^2 E[W'-W|W] = (a+b) \left( \frac{a}{a+b} - W \right).\]

To close the introduction, we mention two natural problems for follow-up work. First, it would be interesting to have a sharp bound for the distance between $W$ and a Beta(a,b) random variable in the Wasserstein and Kolmogorov metrics. The Wasserstein case can perhaps be studied using the methods of Goldstein and Reinert \cite{GoldsteinReinert}. Second, it would be interesting to have a multivariate generalization of our example, possibly using work on Dirichlet distributions in \cite{GanRoss} and \cite{GanRoellinRoss}.

\section{Main results} \label{mainresults}

Recall that a pair of random variables $W,W'$ is called exchangeable if the distribution
of $(W,W')$ is the same as that of $(W',W)$. We will apply the following result (a special case of the much more general 
Theorem 4.4 of D\"obler \cite{Doebler}).

\begin{theorem} \label{Doe} Let $(W,W')$ be an exchangeable pair and suppose that for a constant $\lambda>0$,
\begin{equation} \label{cond1}
\frac{1}{\lambda} E[W'-W|W] = (a+b) \left( \frac{a}{a+b} - W \right)
\end{equation} and 
\begin{equation} \label{cond2}
 \frac{1}{2 \lambda} E[(W'-W)^2|W] = W(1-W) + S
\end{equation} for a remainder term $S$.

Then the $d_2$ distance between $W$ and a Beta(a,b) random variable is at most

\[ C(a,b) E|S| + \left( C(a+1,b+1) + (a+b) C(a+1,b+1) C(a,b) \right) \cdot \frac{E|W'-W|^3}{6 \lambda},\]
where $C(\cdot,\cdot)$ are constants defined by
\begin{equation*} C(a,a) = \left\{ \begin{array}{ll}
4 & \mbox{if $0<a<1$} \\
\frac{2a \sqrt{\pi} \Gamma(a)}{\Gamma(a+1/2)} & \mbox{if $a \geq 1$}
\end{array} \right. \end{equation*}  and for $a \neq b$ by
\begin{equation*} C(a,b) = 2(a+b) \left\{ \begin{array}{ll}
\frac{\Gamma(a) \Gamma(b)}{\Gamma(a+b)}  & \mbox{if $a \leq 1, b \leq 1$} \\
a^{-1} & \mbox{if $a \leq 1, b>1$} \\
b^{-1} & \mbox{if $a>1, b \leq 1$} \\
a^{-1} b^{-1} \frac{\Gamma(a+b)}{\Gamma(a) \Gamma(b)} & \mbox{if $a>1,b>1$}.
\end{array} \right. \end{equation*} 
\end{theorem}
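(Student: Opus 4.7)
My plan is to follow the standard exchangeable-pair scheme of Stein's method for the Beta distribution. Since the result is a special case of Theorem 4.4 of D\"obler \cite{Doebler}, the work essentially amounts to isolating the key ingredients, verifying that the hypotheses (\ref{cond1})--(\ref{cond2}) align with the general setup, and tracking how the constants $C(a,b)$ enter the bound. The scheme has three components: a Stein operator identifying $\mathrm{Beta}(a,b)$, analytic bounds on the solution of the associated Stein equation, and a Taylor-expansion argument leveraging exchangeability of $(W,W')$.

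The Stein operator for $\mathrm{Beta}(a,b)$ is $(\mathcal{A}f)(x) = x(1-x)\,f''(x) + (a-(a+b)x)\,f'(x)$, and the Stein equation is $\mathcal{A}f_h = h - E[h(Y)]$ with $Y \sim \mathrm{Beta}(a,b)$. For $h \in H_2$ one solves this as a first-order ODE in $f_h'$ via an integrating factor, obtaining explicit integral representations for $f_h'$ and $f_h''$. The two analytic inputs I need are
\[ \|f_h''\|_\infty \leq C(a,b), \qquad \mathrm{Lip}(f_h'') \leq C(a+1,b+1) + (a+b)\,C(a,b)\,C(a+1,b+1). \]
The first bound follows by estimating weighted integrals of $h'$ against the Beta density; the four cases in the definition of $C(a,b)$ correspond to whether that density is bounded at each endpoint $0,1$. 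The second bound comes from differentiating the Stein equation once: a short computation shows that $f_h''$ itself satisfies a Stein-type equation of $\mathrm{Beta}(a+1,b+1)$ form with right-hand side $h'(x) + (a+b)\,f_h'(x)$, so applying the first bound at parameters $(a+1,b+1)$ contributes $C(a+1,b+1)$, while the cross term $(a+b)\,C(a,b)\,C(a+1,b+1)$ absorbs the $\|f_h'\|_\infty$ contribution coming from the first bound at $(a,b)$.

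With these bounds in hand, I exploit exchangeability. Since $E[f_h(W')-f_h(W)] = 0$, Taylor expansion gives
\[ f_h(W')-f_h(W) = (W'-W)\,f_h'(W) + \tfrac{1}{2}(W'-W)^2\,f_h''(W) + R, \]
with $|R| \leq \tfrac{1}{6}\mathrm{Lip}(f_h'')\,|W'-W|^3$. Taking $E[\,\cdot\,|W]$ and substituting (\ref{cond1}) and (\ref{cond2}) yields
\[ 0 = \lambda\bigl[\mathcal{A}f_h(W) + S\,f_h''(W)\bigr] + E[R \mid W]. \]
Using $\mathcal{A}f_h(W) = h(W) - E[h(Y)]$, dividing by $\lambda$, taking expectations, and applying the triangle inequality produces
\[ \bigl|E[h(W)] - E[h(Y)]\bigr| \leq \|f_h''\|_\infty\,E|S| + \frac{\mathrm{Lip}(f_h'')}{6\lambda}\,E|W'-W|^3, \]
which upon inserting the two analytic bounds above gives the claim.

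The main obstacle is the analytic step: deriving the case-by-case sup-norm bound on $f_h''$ and a matching bound on its Lipschitz constant, while keeping the constants sharp enough to reproduce $C(a,b)$ exactly. This is where the factors $\Gamma(a)\Gamma(b)/\Gamma(a+b)$, $a^{-1}$, $b^{-1}$ and the special value at $a=b$ arise, each reflecting the behavior of the Beta density and of the Stein-solution integrals near the degenerate points of the coefficient $x(1-x)$; this is precisely the content of the supporting lemmas behind Theorem 4.4 in \cite{Doebler}.
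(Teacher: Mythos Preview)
The paper does not supply its own proof of this theorem: it is quoted verbatim as ``a special case of the much more general Theorem 4.4 of D\"obler \cite{Doebler}'' and then applied as a black box. Your proposal, by contrast, sketches the underlying Stein-method argument from \cite{Doebler} itself---the Beta Stein operator, the sup-norm and Lipschitz bounds on $f_h''$ via the parameter-shift $(a,b)\mapsto(a+1,b+1)$, and the second-order Taylor/exchangeability computation---and this outline is correct. So you are not deviating from the paper's approach; you are simply unpacking the cited reference, which the paper chose not to do.
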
 

We now construct the natural exchangeable pair $(W,W')$ for this example. This pair exactly satisfies Condition (\ref{cond1}) of Theorem \ref{Doe}. Moreover, the remainder term $S$ in Condition (\ref{cond2}) of Theorem \ref{Doe} turns out to be small.

To construct the pair $(W,W')$ we use the Markov chain in the first paragraph of the introduction. More precisely, since the Markov chain
is a birth-death chain, it follows that $\pi(i) p(i,j) = \pi(j) p(j,i)$ for all $i$ and $j$. This allows us
to construct an exchangeable pair $(I,I')$ as follows: choose $I \in \{0,1,\cdots,2n\}$ from $\pi$ and then obtain $I'$ by taking one step according to
the Markov chain. Rescaling by letting $W=I/(2n)$ and $W'=I'/(2n)$ gives our exchangeable pair $(W,W')$. We note that the idea of using Markov chains to construct exchangeable pairs is not new; see for instance \cite{RinottRotar} or \cite{Fulman}. 

As in the introduction, we let $a=2nv$ and $b=2nu$. 

Lemma \ref{firstsat} shows that Condition (\ref{cond1}) of Theorem \ref{Doe} is satisfied.

\begin{lemma} \label{firstsat} For $\lambda=1/(4n^2)$, we have that
\[ \frac{1}{\lambda} E[W'-W|W] = (a+b) \left( \frac{a}{a+b} - W \right).\]
\end{lemma}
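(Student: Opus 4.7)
The plan is a direct computation using the explicit transition probabilities. Since $W = I/(2n)$ and $W' = I'/(2n)$ where $I'$ is obtained from $I$ by one step of the Markov chain, conditional on $I = i$ the increment $I' - I$ takes values $-1, 0, +1$ with probabilities $p(i,i-1)$, $p(i,i)$, $p(i,i+1)$ respectively. Thus $E[W' - W \mid W = i/(2n)] = (p(i,i+1) - p(i,i-1))/(2n)$, and it only remains to evaluate this drift.

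First I would subtract the two transition probabilities from the introduction:
\[
p(i,i+1) - p(i,i-1) \;=\; \frac{\bigl[i(2n-i)(1-u) + v(2n-i)^2\bigr] - \bigl[i(2n-i)(1-v) + u i^2\bigr]}{(2n)^2}.
\]
The $i(2n-i)$ terms combine to $i(2n-i)(v-u)$, giving a numerator of $i(2n-i)(v-u) + v(2n-i)^2 - u i^2$. Substituting $v = a/(2n)$ and $u = b/(2n)$ pulls out a factor $1/(2n)$, leaving $i(2n-i)(a-b) + a(2n-i)^2 - b i^2$ inside. Grouping the $a$-terms yields $a(2n-i)[(2n-i) + i] = 2na(2n-i)$ and grouping the $b$-terms yields $-b i[(2n-i) + i] = -2nbi$, so the bracket collapses to $2n\bigl[a(2n-i) - bi\bigr]$.

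Assembling the pieces gives
\[
E[W' - W \mid W = i/(2n)] \;=\; \frac{1}{2n} \cdot \frac{a(2n-i) - bi}{(2n)^2} \;=\; \frac{a(2n-i) - bi}{(2n)^3}.
\]
Multiplying by $1/\lambda = 4n^2$ and using $W = i/(2n)$ produces $a - (a+b)W = (a+b)\bigl(\tfrac{a}{a+b} - W\bigr)$, which is the claimed identity.

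There is no real obstacle here beyond bookkeeping: the cross-terms in the transition probabilities cancel cleanly because the $(1-u)$ and $(1-v)$ factors differ only by $v - u$, and the quadratic pieces $a(2n-i)^2$ and $bi^2$ combine with the linear correction to yield a common factor of $2n$, matching the factor of $(2n)^2$ in the denominator to give an order-$1/n$ drift rather than something smaller. The "minor miracle" mentioned in the introduction is simply that after these cancellations the drift is exactly affine in $W$ with the correct Beta$(a,b)$ mean $a/(a+b)$, with no leftover error term.
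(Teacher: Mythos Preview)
Your proof is correct and follows essentially the same approach as the paper: both compute $E[W'-W\mid W]=\frac{1}{2n}\bigl(p(I,I+1)-p(I,I-1)\bigr)$ directly from the transition probabilities and simplify. The only cosmetic difference is that the paper first simplifies the numerator in terms of $u,v$ to $2nI(-u-v)+4n^2v$ before substituting $u=b/(2n)$, $v=a/(2n)$, whereas you substitute first and then group the $a$- and $b$-terms; the algebra is equivalent.
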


\begin{proof} By the construction of the pair $(W,W')$, one has that
\begin{eqnarray*}
& & E[W'-W|W] \\
& = & \frac{1}{2n} [p(I,I+1)-p(I,I-1)] \\
& = & \frac{I(2n-I)(1-u)+v(2n-I)^2 - I(2n-I)(1-v)-uI^2}{8n^3}.
\end{eqnarray*}

This simplifies to
\begin{eqnarray*}
\frac{1}{8n^3} [2nI(-u-v) + v4n^2] & = & \frac{1}{8n^3} [2nW(-a-b) + 2na] \\
& = & \frac{1}{4n^2} [a-W(a+b)] \\
& = & \frac{(a+b)}{4n^2} \left( \frac{a}{a+b} - W \right). 
\end{eqnarray*}
\end{proof}

As a corollary of Lemma \ref{firstsat}, we compute the mean of $W$, which is not obvious from its definition. The mean agrees with that of a
Beta(a,b) random variable.

\begin{cor} \label{calcmean}
\[ E[W] = \frac{a}{a+b}. \]
\end{cor}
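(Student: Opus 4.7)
The plan is to exploit exchangeability of the pair $(W,W')$ together with the linear regression identity established in Lemma \ref{firstsat}. Since the joint distribution of $(W,W')$ equals that of $(W',W)$, the marginals agree, and in particular $E[W'] = E[W]$, so $E[W'-W] = 0$. I would then take the unconditional expectation of both sides of the identity in Lemma \ref{firstsat} and invoke the tower property $E[E[W'-W\mid W]] = E[W'-W]$.

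Concretely, from Lemma \ref{firstsat} we have
\[
E[W'-W\mid W] \;=\; \lambda(a+b)\!\left(\frac{a}{a+b}-W\right),
\]
with $\lambda = 1/(4n^2)$. Taking expectations and using the vanishing of $E[W'-W]$ yields
\[
0 \;=\; \lambda(a+b)\!\left(\frac{a}{a+b}-E[W]\right).
\]
Since $\lambda(a+b)>0$ (we are in the regime where $a,b>0$ are fixed positive parameters), one can divide through and solve, giving $E[W] = a/(a+b)$.

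There is essentially no obstacle here: the only thing to be careful about is ensuring $a+b \neq 0$, which holds under the standing assumption that $u,v$ are positive parameters so that $a=2nv$ and $b=2nu$ are positive. This is the standard way exchangeable-pair identities pin down the mean of the target distribution, and it is pleasing that the Beta$(a,b)$ mean $a/(a+b)$ emerges on the nose, matching the fact that Condition (\ref{cond1}) is satisfied exactly (not just approximately).
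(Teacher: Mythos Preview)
Your proof is correct and follows essentially the same approach as the paper: both use exchangeability to get $E[W'-W]=0$, apply the tower property to the identity from Lemma \ref{firstsat}, and solve the resulting linear equation for $E[W]$. The paper's version is simply a one-line compression of your argument.
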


\begin{proof} Since $W$ and $W'$ have the same distribution, it follows from Lemma \ref{firstsat} that
\[ 0 = E[W'-W] = E[E(W'-W|W)] = E \left[ \frac{(a+b)}{4n^2} \left( \frac{a}{a+b} - W \right) \right].\]
\end{proof}

Next we calculate the variance of $W$, which will be useful in lower bounding the $d_2$ distance between $W$ and a Beta(a,b) random variable.
As with the mean, the computation of the variance of $W$ is not automatic from its definition.

\begin{prop} \label{2ndmom}
\[ Var(W) = \frac{2abn}{(a+b)^2 (2n+a(2n-1)+b(2n-1))}.\]
\end{prop}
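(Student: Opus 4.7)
The plan is to mirror the exchangeable-pair strategy used in Corollary \ref{calcmean}, now applied at the second-moment level. The key observation is that exchangeability forces $E[(W')^2] = E[W^2]$, and this, combined with Lemma \ref{firstsat}, converts a second moment identity into a linear equation for $E[W^2]$.

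First I will derive an identity linking $Var(W)$ to $E[(W'-W)^2]$. Expanding
\[ 0 = E[(W')^2 - W^2] = E[(W'-W)(W'+W)] = 2E[W(W'-W)] + E[(W'-W)^2] \]
and conditioning the middle term on $W$, Lemma \ref{firstsat} gives
\[ E[W(W'-W)] = \frac{1}{4n^2}\bigl(a\,E[W] - (a+b)\,E[W^2]\bigr). \]
Using $E[W] = a/(a+b)$ from Corollary \ref{calcmean}, this rearranges cleanly to
\[ Var(W) = E[W^2] - \frac{a^2}{(a+b)^2} = \frac{2n^2}{a+b}\, E[(W'-W)^2]. \]

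Next I will compute $E[(W'-W)^2 \mid W]$ directly from the transition probabilities. Since $W'-W \in \{-1/(2n),\,0,\,1/(2n)\}$, one has
\[ E[(W'-W)^2 \mid W] = \frac{p(I,I-1) + p(I,I+1)}{4n^2}, \qquad I = 2nW. \]
Substituting $v = a/(2n)$, $u = b/(2n)$ and expressing everything in terms of $W$ yields a quadratic polynomial in $W$. Taking unconditional expectation and again using $E[W] = a/(a+b)$ produces $E[(W'-W)^2]$ as an explicit affine function of $E[W^2]$.

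Combining the two expressions yields a single linear equation in the unknown $E[W^2]$. Solving it, then forming $Var(W) = E[W^2] - a^2/(a+b)^2$, is the only remaining task. The main obstacle is purely algebraic: after clearing denominators, the numerator of $(a+b)(2na+2n-a) - a[2n + a(2n-1) + b(2n-1)]$ must collapse to $2nb$. This cancellation is striking, as the $a^2$, $ab$, and $a$-linear contributions all vanish, leaving only a $2nb$ term, which combined with the $a$ factored out of $Var(W)$ gives the stated numerator $2abn$ and denominator $(a+b)^2[2n + a(2n-1) + b(2n-1)]$. Apart from this simplification step, the argument is mechanical and a direct analogue of the mean computation.
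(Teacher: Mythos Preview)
Your proposal is correct and rests on the same core idea as the paper: exchangeability forces $E[(W')^2-W^2]=0$, and expanding the conditional expectation produces a linear equation for $E[W^2]$ in terms of $E[W]$.

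The organizational difference is worth a sentence. The paper expands $E[(I')^2-I^2\mid I]$ directly as $p(I,I+1)(2I+1)-p(I,I-1)(2I-1)$, observes that the cubic terms cancel so the result is quadratic in $I$, and then invokes the linear relation without writing out the algebra. You instead split $(W')^2-W^2 = 2W(W'-W)+(W'-W)^2$ and handle the two pieces separately: the first via the linear regression of Lemma~\ref{firstsat}, the second via the transition-probability computation that the paper carries out later in Lemma~\ref{secondsat}. Your route recycles those two Stein-condition calculations rather than doing a fresh polynomial expansion, and it yields the pleasant intermediate identity $Var(W)=\frac{2n^2}{a+b}\,E[(W'-W)^2]$, which is not isolated in the paper. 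Both arguments terminate in the same linear equation and the same ``miraculous'' cancellation you flag, where $(a+b)(2n+a(2n-1))-a\bigl[2n+(a+b)(2n-1)\bigr]=2nb$.
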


\begin{proof} By exchangeability, $E[(W')^2-W^2]=0$. Thus
\begin{equation}\label{recur} E[E[(W')^2-W^2|W]] = 0.\end{equation} Now
\[ E[(W')^2-W^2|W] \] is proportional to 
\[ E[(I')^2-I^2|I] = p(I,I+1) \cdot ((I+1)^2-I^2) + p(I,I-1) \cdot ((I-1)^2-I^2) \] which is proportional to
\begin{eqnarray*}
& & [I(2n-I)(1-u)+v(2n-I)^2] \cdot (2I+1) \\
& & - [I(2n-I)(1-v)+uI^2] \cdot (2I-1)
\end{eqnarray*} Expanding this as a polynomial in $I$, one sees that
there is cancellation of the $I^3$ terms but not of the $I^2$ terms. Hence $E[(W')^2-W^2|W]$ is a polynomial of degree $2$ in $W$.
Thus by equation (\ref{recur}) one can express $E[W^2]$ in terms of $E[W]$, and the result follows from Corollary \ref{calcmean}.
\end{proof}

{\it Remarks:}
\begin{itemize} 
\item The variance of a Beta(a,b) random variable is equal to
\[ \frac{ab}{(a+b)^2(a+b+1)}.\] Note that Var(W) converges to this as $n \rightarrow \infty$. 

\item The method of Proposition \ref{2ndmom} can be generalized to recursively calculate higher
moments of $W$. Indeed,
let $r \geq 2$ be a positive integer. By exchangeability, $E[(W')^r-W^r]=0$. Thus
\begin{equation}\label{recur2} E[E[(W')^r-W^r|W]] = 0.\end{equation} One calculates that
\[ E[(W')^r-W^r|W] \] is a polynomial of degree $r$ in $W$. So by equation (\ref{recur2}) one can express
$E[W^r]$ in terms of $E[W^1],E[W^2],\cdots,E[W^{r-1}]$. 
\end{itemize}

Lemma \ref{secondsat} shows that Condition (\ref{cond2}) of Theorem \ref{Doe} is satisfied with a small value for the term $S$.

\begin{lemma} \label{secondsat} For $\lambda=1/(4n^2)$, we have that
\[ \frac{1}{2 \lambda} E[(W'-W)^2|W] = W(1-W) + S \]
where \[ S = \frac{1}{4n} \left[2(a+b)W^2 - (3a+b)W + a \right]. \] 
\end{lemma}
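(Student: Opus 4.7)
The plan is to do a direct algebraic computation, parallel in style to the proof of Lemma \ref{firstsat}. Since $I'$ is obtained from $I$ by one step of the birth-death chain, we have $I'-I \in \{-1,0,1\}$, so $(I'-I)^2$ equals $1$ when the chain moves and $0$ when it stays put. Thus
\[
E[(W'-W)^2 \mid W] \;=\; \frac{1}{(2n)^2}\,E[(I'-I)^2 \mid I] \;=\; \frac{p(I,I+1)+p(I,I-1)}{4n^2}.
\]
With $\lambda = 1/(4n^2)$ this gives
\[
\frac{1}{2\lambda} E[(W'-W)^2 \mid W] \;=\; \frac{p(I,I+1)+p(I,I-1)}{2}.
\]

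Next I would substitute the expressions for $p(I,I\pm 1)$ from the definition of the chain, getting
\[
\frac{1}{2}\bigl[p(I,I+1)+p(I,I-1)\bigr] \;=\; \frac{I(2n-I)(2-u-v) + u I^2 + v(2n-I)^2}{2(2n)^2}.
\]
Then rewrite everything in terms of $W = I/(2n)$, $u = b/(2n)$, $v = a/(2n)$, using
\[
I(2n-I) = 4n^2 W(1-W),\qquad I^2 = 4n^2 W^2,\qquad (2n-I)^2 = 4n^2(1-W)^2.
\]
After simplification the expression becomes
\[
W(1-W)\left(1 - \frac{a+b}{4n}\right) \;+\; \frac{bW^2 + a(1-W)^2}{4n}.
\]

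Finally I would isolate the leading $W(1-W)$ and collect the remaining $1/(4n)$ terms to identify $S$. Expanding
\[
-(a+b)W(1-W) + bW^2 + a(1-W)^2
\]
as a polynomial in $W$ yields the coefficients $2(a+b)$ on $W^2$, $-(3a+b)$ on $W$, and $a$ as the constant term, giving exactly the claimed $S = \frac{1}{4n}[2(a+b)W^2 - (3a+b)W + a]$. There is no real obstacle here; the only thing to watch is the careful bookkeeping of the $a$ versus $b$ coefficients (which come from $v$ versus $u$ respectively), since the asymmetry between them is what produces the non-zero remainder $S$.
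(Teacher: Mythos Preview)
Your proposal is correct and follows essentially the same approach as the paper: both start from $E[(W'-W)^2\mid W]=\frac{1}{4n^2}[p(I,I+1)+p(I,I-1)]$ and reduce the result to a quadratic in $W$ by direct substitution and simplification. Your grouping via $I(2n-I)$, $I^2$, and $(2n-I)^2$ is a slightly tidier bookkeeping choice than the paper's full expansion in powers of $I$, but the underlying computation is the same.
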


\begin{proof}
\begin{eqnarray*}
& & E[(W'-W)^2|W] \\
& = & \frac{1}{(2n)^2} [p(I,I+1) + p(I,I-1)] \\
& = & \frac{1}{(2n)^4} [I(2n-I)(1-u)+v(2n-I)^2+I(2n-I)(1-v)+uI^2]\\
& = & \frac{1}{(2n)^4} [(-2I^2+4In)+I^2(2u+2v)+I(-2nu-6nv)+4n^2v]\\
& = & \frac{8n^2}{(2n)^4} \left[ \frac{-I^2}{4n^2} + \frac{I}{2n} \right] \\
& & + \frac{8n^2}{(2n)^4} \left[ \frac{I^2}{4n^2} \frac{a+b}{2n} + \frac{I}{8n^2} (-b-3a) + \frac{a}{4n} \right] \\
& = & \frac{1}{2n^2} \left[ W(1-W) \right] + \frac{1}{2n^2} \left [\frac{a+b}{2n}W^2 - \frac{(3a+b)}{4n} W  + \frac{a}{4n} \right].
\end{eqnarray*} The theorem follows.
\end{proof}

Finally, we prove our main result.

\begin{proof} (Of Theorem \ref{maintheorem}).

For the upper bound, we apply Theorem \ref{Doe} to our exchangeable pair. Lemma \ref{firstsat}
shows that the hypotheses are met. By Lemma \ref{secondsat} and the fact that $0 \leq W \leq 1$, it follows that
\[ E|S| \leq \frac{3a+2b}{4n}.\] Since $\lambda=1/(4n^2)$ and $|W'-W| \leq 1/(2n)$, it follows that
\[ \frac{E|W'-W|^3}{\lambda} \leq \frac{4n^2}{8n^3} = \frac{1}{2n}.\] Putting these bounds together
proves the upper bound.

For the lower bound, as in \cite{GoldsteinReinert}, one would like to let $h(x)$ be the test function which is
$\frac{1}{2}(x-x^2)$ on $[0,1]$ and $0$ elsewhere. However
this function does not lie in the class $H_2$ defined in the introduction (the right hand derivative of $h$ at $0$ is not equal to
the left hand derivative of $h$ at $0$). However our random variable $W$ is supported on $[0,1]$, and as D\"{o}bler has explained,
there is a function $g$ in $H_2$ (on $\mathbb{R}$)
such that $g(x)=\frac{1}{2} x(1-x)$ on $[0,1]$. One cannot take $g(x)=\frac{1}{2}x(1-x)$ on $\mathbb{R}$ because this $g$
is not Lipschitz on all of $\mathbb{R}$. But one can take

\begin{equation*} g(x) =  \left\{ \begin{array}{ll}
h(x-2k)  & \mbox{for $x \in [2k,2k+1], (k \in \mathbb{Z})$} \\
-h(x-2k-1) & \mbox{for $x \in [2k+1,2k+2], (k \in \mathbb{Z})$}.
\end{array} \right. \end{equation*} 

So we can use the function $h(x)=\frac{1}{2}x(1-x)$ to lower bound the distance between $W$ and a Beta(a,b) random variable $Z$.
From known formulas for the mean and variance of $Z$, it follows
that \[ E[h(Z)] = \frac{ab}{2(a+b)(1+a+b)}.\] From Corollary \ref{calcmean} and Proposition \ref{2ndmom}, it follows that
\[ E[h(W)] = \frac{ab(2n-1)}{2(a+b)(2n+a(2n-1)+b(2n-1))}.\]  Thus
\begin{eqnarray*}
& & |E[h(W)]- E[h(Z)]| \\
& = & \frac{ab}{2(a+b)(1+a+b)(2n+(2n-1)a+(2n-1)b)} \\
& \geq & \frac{ab}{4n(a+b)(1+a+b)^2},
\end{eqnarray*} and the result follows.
\end{proof}

\end{document}